\newtheorem{theorem}{Theorem}[section]
\newtheorem{lemma}[theorem]{Lemma}
\newtheorem{corollary}[theorem]{Corollary}
\theoremstyle{remark}
\theoremstyle{definition}
\newtheorem{example}[theorem]{Example}
\DeclareMathOperator{\Hom}{{\mathrm{Hom}}}
\newcommand{\abs}[1]{|#1|}	
\newcommand{\Fq}{{\mathbb {F}_q}} 
\newcommand{\Fstar}{{\mathbb {F}_q^\times}} 
\newcommand{\Z}{{\mathbb {Z}}} 
\newcommand{\Q}{{\mathbb {Q}}}
\DeclareMathOperator{\Paley}{{\mathrm {Paley}}} 
\DeclareMathOperator{\diag}{{\mathrm {diag}}} 	
\newcommand{\allone}{{\mathbf 1}} 	
\begin{document}
\title[Smith and critical groups of Paley graphs]{The Smith and critical groups of Paley graphs}
\author{David B. Chandler, Peter Sin$^*$ and Qing Xiang}
\address{6 Georgian Circle, Newark, DE 19711\\USA}
\address{Department of Mathematics\\University of Florida\\ P. O. Box 118105\\ Gainesville, FL 32611\\ USA}
\address{Department of Mathematical Sciences, University of Delaware, Newark, DE 19716\\USA}
\date{}
\thanks{$^*$This work was partially supported by a grant from the Simons Foundation (\#204181 to Peter Sin)}



\begin{abstract}
There is a Paley graph for each prime power $q$ such that $q\equiv 1\pmod 4$. The vertex set is the field $\Fq$ and two vertices $x$ and $y$ are joined by an edge if and only if $x-y$ is a nonzero square of $\Fq$. We compute the Smith normal forms of the adjacency matrix and Laplacian matrix of a Paley graph.
\end{abstract}

\maketitle
\section{Introduction}
Let $\Gamma$ be a finite, simple, undirected and connected graph and let $A$ be the adjacency matrix of $\Gamma$  with respect to some fixed but arbitrary ordering of the vertex set of $\Gamma$. Let $D$ be the diagonal matrix whose $(i,i)$-entry is the degree of the $i^{\rm th}$ vertex. Then $L=D-A$ is called the {\it Laplacian matrix} of $\Gamma$. The matrices $A$ and $L$ represent endomorphisms (which will also be denoted by $A$ and $L$)  of the free abelian group on the vertex set. 
The structure of their cokernels as abelian groups is independent of the above ordering.  The cokernel of $A$ is called the {\it Smith group} $S(\Gamma)$, since its computation is equivalent to finding the Smith normal form of the matrix $A$. The endomorphism $L$ maps the sum of all vertices to zero, so the cokernel of $L$ is not a torsion group. The torsion subgroup $K(\Gamma)$ of the cokernel  of $L$ is called the {\it critical group} of $\Gamma$. It is known by Kirchhoff's matrix-tree theorem that the order of $K(\Gamma)$ is equal to the number of spanning trees of $\Gamma$.

The critical group of a graph arises in several contexts, for example in arithmetic geometry \cite{DL89}, in statistical physics \cite{DH} and in combinatorics \cite{Big}. There are also interpretations of the critical group in discrete dynamics (chip-firing games and abelian sandpile models, cf. \cite{LP}). We refer the reader to \cite{Lorenzini} for a discussion of these and other connections.

So far, there are very few families of graphs
for which the critical groups have been found, so it is of some interest to
compute the Smith and critical groups for some well-known families of graphs.
 
In this note we treat the  Paley graphs. 
Let $q=p^t$ be a fixed  prime power with $q\equiv 1\pmod 4$.
The Paley graph $\Paley(q)$ is defined by taking the field $\Fq$ as vertex set, 
with two vertices $x$ and $y$ joined by an edge if and only if $x-y$ is a nonzero square in $\Fq$. The degree of each vertex is $k=\frac{q-1}{2}$. 
Let $A$ denote the adjacency matrix and  $L=kI-A$  the Laplacian matrix.
Our main result is the computation of $S(\Paley(q))$  and $K(\Paley(q))$. 
There has been some earlier work in this direction. 
The structure of the $S(\Paley(q))$ was correctly conjectured in 
\cite[Ex. 4--8]{Rushanan1}.
In \cite{Lorenzini} the critical group of a conference graph  on a square-free number of vertices was calculated, and $\Paley(q)$ is such a graph when $q$ is a prime.
The  $p$-rank of the Laplacian of $\Paley(q)$ was first computed in \cite{BE}. 

Here is a brief outline of our method. 
We view $\Paley(q)$ as a Cayley graph, with the regular action
of the additive group of $\Fq$. Then, in \S\ref{eval}, we follow a standard method, 
applying the discrete Fourier transform while keeping track of coefficient rings,
to compute the Smith group and also the $p$-complementary part of the critical group.

A different approach is needed to compute the $p$-part of $K(\Paley(q))$. 
In \S\ref{jac} we study  the permutation action of the group $S$ of nonzero squares  
on $\Fq$ by multiplication and on the free $R$-module $R^{\Fq}$ with basis $\Fq$ over a
suitable extension ring $R$. The {\it isotypic component} of an $R$-free $RS$-module $M$ with respect to a character $\chi:S\to R^\times$ is defined to be the submodule
$\{m\in M \mid\,  sm=\chi(s)m\quad  \text{for all $s\in S$}\}$.   
The  $RS$-module $R^{\Fq}$ decomposes  into isotypic components of rank 2 (except for one of rank 3). Since $S$ preserves adjacency, these isotypic components are $A$-invariant. In the computation of the restriction of $A$ to each isotypic component, certain Jacobi sums arise naturally and the main problem
is reduced to determining the $p$-adic valuations of these Jacobi sums. The classical theorem of Stickelberger on Gauss sums gives the valuation for individual sums, but there remains the  problem of counting the number of sums with a given valuation. This counting problem is solved by the transfer matrix method in \S\ref{count}. It is also possible to
count directly, but the chosen method has the advantages of being  
systematic and of yielding immediately the rationality of the generating function.

\section{Eigenvalues and $p'$-torsion}\label{eval}
It is well known and easily checked that $\Paley(q)$ is a 
strongly regular graph and that its eigenvalues are $k=\frac{q-1}{2}$,
$r=\frac{-1+\sqrt{q}}{2}$ and $s=\frac{-1-\sqrt{q}}{2}$, with multiplicities $1$, $\frac{q-1}{2}$ and $\frac{q-1}{2}$, respectively. (See, for example, [8.1.1]\cite{BH}). 
Hence, $$\abs{S(\Paley(q))}=\det(A)=k\left(\frac k2\right)^{k},$$ where $A$ is the adjacency matrix of $\Paley(q)$. It follows that $\gcd(\abs{S(\Paley(q))}, q)=1$. Therefore we can use the diagonalization of $A$ by the character table of $(\Fq, +)$ to find the Smith normal form of $A$. 

Let $S$ be the set of nonzero squares in $\Fq$. We can view $\Paley(q)$ as a Cayley graph with connecting set $S$. Let $X$ be the complex character table of the additive group of $\Fq$ where the elements are ordered in the same way as for the rows and columns of $A$. The entries of $X$ lie in the ring $\Z[\zeta]$,
where $\zeta$ is a complex primitive $p$-th root of unity. As was first observed in  \cite{McWM}, we have the character orthogonality relation  $\frac1q X\overline{X}^t=I$ and
\begin{equation}\label{diag}
\frac1q XA\overline{X}^t=\diag(\psi(S))_{\psi},
\end{equation}
where $\psi$ runs over the additive characters of $\Fq$ and $\psi(S)=\sum_{y\in S}\psi(y)$. Thus, the $\psi(S)$ are the eigenvalues of $A$.
Since the eigenvalues of $A$ are all prime to $p$, the structure of $S(\Paley(q))$ can
be completely determined from (\ref{diag}). (See \cite[\S 3.2]{sin4}, or \cite[\S2]{CX}, where the same argument is used for difference sets.)
It suffices to determine the structure of the $\ell$-Sylow subgroup
for each prime $\ell$ different from $p$. Such an $\ell$ is unramified in $\Z[\zeta]$, and (\ref{diag}) 
can be interpreted as expressing the  equivalence of matrices
with entries in the localized ring $\Z[\zeta]_{(\ell)}$. This latter
ring is a principal ideal domain and the list of exact powers of $\ell$ dividing the
$\psi(S)$ is precisely the list of elementary divisors of $A$ with respect
to the prime $\ell$ (or $\ell$-elementary divisors for short).

By applying the above to each $\ell\neq p$, and  noting that $r$ and $s$
are coprime, with $rs=\frac{1-q}{4}$, we obtain the following result.

\begin{theorem} The Smith group of $\Paley(q)$ is isomorphic to
$\Z/2\mu\Z\oplus(\Z/\mu\Z)^{2\mu}$, where $\mu=\frac{q-1}{4}$.
\end{theorem}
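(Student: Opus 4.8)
The plan is to combine the diagonalization \eqref{diag} with the known spectrum of $\Paley(q)$ and read off the elementary divisors one prime at a time. Since $\abs{S(\Paley(q))}=\det(A)=k(k/2)^k$ is prime to $p$, it suffices to determine, for each prime $\ell\neq p$, the multiset of $\ell$-elementary divisors of $A$, and by the discussion preceding the theorem these are exactly the exact powers of $\ell$ dividing the eigenvalues $\psi(S)$. Concretely, fixing a maximal ideal $\mathfrak{l}$ of the semilocal principal ideal domain $\Z[\zeta]_{(\ell)}$, the $i$-th smallest elementary divisor $d_i$ of $A$ satisfies $v_\ell(d_i)=a_i$, where $a_1\le\cdots\le a_q$ is the increasing rearrangement of the multiset $\{v_{\mathfrak{l}}(\psi(S))\}_\psi$; because $\ell$ is unramified in $\Z[\zeta]$ this rearrangement does not depend on the choice of $\mathfrak{l}$.

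Next I would compute these valuations purely from the spectral data. The trivial character contributes the eigenvalue $k=2\mu$, with multiplicity $1$, while the nontrivial characters contribute $r$ and $s$, each with multiplicity $\frac{q-1}{2}=2\mu$. The two relations $r+s=-1$ and $rs=\frac{1-q}{4}=-\mu$ do all the work: since $r+s=-1$ we have $(r)+(s)=(1)$ in $\Z[\zeta]$, so for each $\mathfrak{l}$ at most one of $r,s$ lies in $\mathfrak{l}$; combined with $v_{\mathfrak{l}}(r)+v_{\mathfrak{l}}(s)=v_{\mathfrak{l}}(rs)=v_\ell(\mu)$ (using $\mu\in\Z$ and that $\ell$ is unramified), this forces $\{v_{\mathfrak{l}}(r),v_{\mathfrak{l}}(s)\}=\{0,\,v_\ell(\mu)\}$. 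Hence for every $\ell\neq p$ the multiset $\{v_{\mathfrak{l}}(\psi(S))\}_\psi$ consists of the value $0$ with multiplicity $2\mu$, the value $v_\ell(\mu)$ with multiplicity $2\mu$, and the value $v_\ell(2\mu)$ once.

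Finally I would assemble the global Smith normal form. Since $v_\ell(2\mu)\ge v_\ell(\mu)\ge 0$, the ordered exponent list for each prime $\ell\neq p$ is $a_1=\cdots=a_{2\mu}=0$, $a_{2\mu+1}=\cdots=a_{4\mu}=v_\ell(\mu)$, and $a_{4\mu+1}=v_\ell(2\mu)$ (note $4\mu+1=q$, matching the size of $A$). Moreover $p\mid q$ forces $p\nmid q-1=4\mu$, and $p$ is odd, so neither $\mu$ nor $2\mu$ has a $p$-part; multiplying over all $\ell\neq p$ therefore gives
\begin{equation*}
\snf(A)=\diag(\underbrace{1,\dots,1}_{2\mu},\ \underbrace{\mu,\dots,\mu}_{2\mu},\ 2\mu),
\end{equation*}
whence $S(\Paley(q))=\coker(A)\cong(\Z/\mu\Z)^{2\mu}\oplus\Z/2\mu\Z$. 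As a consistency check, the order of this group is $2\mu^{2\mu+1}=k(k/2)^k=\det(A)$.

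I do not anticipate a real obstacle: once the reduction to a single prime $\ell\neq p$ is in place, everything is forced by the identities $r+s=-1$ and $rs=-\mu$ together with the eigenvalue multiplicities $1,2\mu,2\mu$. The only point needing a little care is the comparison of the $\ell$-elementary divisors of $A$ over $\Z$ (equivalently over $\Z_{(\ell)}$) with those read off after the base change to $\Z[\zeta]_{(\ell)}$ in \eqref{diag}; this is precisely where the unramifiedness of $\ell$ in $\Z[\zeta]$ enters, and it has already been recorded in the paragraph preceding the theorem.
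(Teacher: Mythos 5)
Your proposal is correct and is essentially the paper's own argument: the paper likewise reduces to each prime $\ell\neq p$ via the character-table diagonalization over $\Z[\zeta]_{(\ell)}$ and then deduces the elementary divisors from the facts that $r$ and $s$ are coprime (from $r+s=-1$) and $rs=\frac{1-q}{4}=-\mu$, together with the multiplicities $1,2\mu,2\mu$. You have merely written out in detail the one-sentence deduction the paper gives before stating the theorem.
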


From the eigenvalues of $A$, we easily obtain those of $L=kI-A$ (the Laplacian matrix of $\Paley(q)$), namely $0$, with multiplicity 1, and $\frac{(q+\sqrt{q})}{2}$ and $\frac{(q-\sqrt{q})}{2}$, each with multiplicity $\frac{q-1}{2}$. It follows from Kirchhoff's matrix-tree theorem that
$$|K(\Paley(q))|=\frac{1}{q}\left(\frac{q+\sqrt{q}}{2}\right)^{k}\left(\frac{q-\sqrt{q}}{2}\right)^k=q^{\frac{q-3}{2}}\mu^{k},$$
where $\mu=\frac{q-1}{4}$.

The $\ell$-elementary divisors of $L$  for primes $\ell\neq p$
can be found in exactly the same way as we found the elementary
divisors of $A$. We can therefore determine the subgroup $K(\Paley(q))_{p'}$
which is complementary to the Sylow $p$-subgroup of $K(\Paley(q))$.

\begin{theorem} Let $K(\Paley(q))=K(\Paley(q))_p\oplus K(\Paley(q))_{p'}$ be the decomposition
of the critical group of $\Paley(q)$ into its Sylow $p$-subgroup and $p$-complement. 
Then $K(\Paley(q))_{p'}\cong (\Z/\mu\Z)^{2\mu}$, where $\mu=\frac{q-1}{4}$.
\end{theorem}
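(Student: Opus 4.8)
The plan is to carry out, for $L=kI-A$, the same localization argument used above to compute $S(\Paley(q))$, and then to read off the $p$-complementary part. First I would record the Laplacian version of (\ref{diag}): since $\tfrac1q X\overline X^t=I$, conjugating $L$ by the character table gives $\tfrac1q XL\overline X^t=\diag(k-\psi(S))_\psi$, so the eigenvalues of $L$ occupy the diagonal. The trivial character contributes $k-\abs{S}=0$, while for each nontrivial $\psi$ one has $k-\psi(S)\in\{k-r,\,k-s\}=\{\tfrac{q-\sqrt q}{2},\,\tfrac{q+\sqrt q}{2}\}$, each value occurring $k=\tfrac{q-1}{2}$ times, matching the eigenvalue multiplicities of $L$. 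Write $\lambda=k-r$ and $\nu=k-s$.

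Next, fix a prime $\ell\ne p$. Because $q=p^t$ is a unit in $\Z[\zeta]_{(\ell)}$, the matrices $X$ and $\overline X^t$ are invertible over this ring, which is moreover a principal ideal domain since $\ell$ is unramified in $\Z[\zeta]$; hence $L$ is equivalent over $\Z[\zeta]_{(\ell)}$ --- and therefore, $\ell$-locally, over $\Z$ --- to $\diag(0,\lambda^{(k)},\nu^{(k)})$, exactly as in the determination of $S(\Paley(q))$. To find the $\ell$-elementary divisors I would use the two identities $\lambda\nu=(k-r)(k-s)=q\mu$ and $\lambda-\nu=s-r=-\sqrt q$. From the latter, any prime $\mathfrak p$ of $\Z[\zeta]$ dividing both $\lambda$ and $\nu$ divides $\sqrt q$, hence $q$, hence lies over $p$; so for $\mathfrak p\mid\ell$ at most one of $v_{\mathfrak p}(\lambda),v_{\mathfrak p}(\nu)$ is nonzero, and by the former their sum is $v_{\mathfrak p}(q\mu)=v_\ell(\mu)$. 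Thus for every $\mathfrak p\mid\ell$ the sorted multiset of $\mathfrak p$-adic valuations of the $q$ diagonal entries is $\{0^{(k)},\,v_\ell(\mu)^{(k)},\,\infty\}$; consequently $L$ has exactly $q-1$ nonzero $\ell$-elementary divisors, namely $1$ with multiplicity $k$ and $\ell^{v_\ell(\mu)}$ with multiplicity $k$.

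Now I would assemble the global picture. Since $\coker(L)\cong\Z\oplus\bigoplus_{i=1}^{q-1}\Z/f_i\Z$ with $f_1\mid\cdots\mid f_{q-1}$ the nonzero invariant factors, the previous paragraph gives, for every $\ell\ne p$, that $v_\ell(f_i)=0$ for $1\le i\le k$ and $v_\ell(f_i)=v_\ell(\mu)$ for $k<i\le 2k=q-1$. Hence $(f_i)_{p'}=1$ for $i\le k$ and $(f_i)_{p'}=\mu_{p'}$ for $k<i\le q-1$, and since $\mu=\tfrac{p^t-1}{4}$ is prime to $p$ we have $\mu_{p'}=\mu$. Therefore
\[
K(\Paley(q))_{p'}\;\cong\;\bigoplus_{i=1}^{q-1}\Z/(f_i)_{p'}\Z\;\cong\;(\Z/\mu\Z)^{k}\;=\;(\Z/\mu\Z)^{2\mu}.
\]

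The only points requiring care are the standard ones: the identification of the $\ell$-elementary divisors of an integer matrix with those of its diagonalization over the unramified extension $\Z[\zeta]_{(\ell)}$ (used already for $A$), and the verification of $\lambda\nu=q\mu$ and $\lambda-\nu=-\sqrt q$ together with the coprimality of $\lambda$ and $\nu$ away from $p$ that they yield; none of this is a genuine obstacle, being essentially the computation that produced $S(\Paley(q))$. This is also exactly why the method reaches only $K(\Paley(q))_{p'}$: unlike $\det A$, the product of the nonzero eigenvalues of $L$ equals $(\lambda\nu)^k=(q\mu)^k$, which is divisible by $p$, so the Sylow $p$-subgroup of $K(\Paley(q))$ is untouched here and must be treated by the separate argument of the following sections.
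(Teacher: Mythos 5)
Your proposal is correct and follows essentially the same route as the paper: diagonalize $L$ by the additive character table, work over the unramified localization $\Z[\zeta]_{(\ell)}$ for each $\ell\neq p$, and use the identities $\lambda\nu=q\mu$ and $\lambda-\nu=-\sqrt q$ (the Laplacian analogue of the paper's observation that $r$ and $s$ are coprime with $rs=\frac{1-q}{4}$) to read off the $\ell$-elementary divisors. The paper leaves these details as ``the same way as for $A$''; you have simply written them out, correctly.
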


The $p$-elementary divisors of $L$  remain to be computed and the rest of the paper is devoted to this task.

\section{Character sums and invariants}\label{jac}

We will adopt the same notation as in \S\ref{eval}. In order to find the $p$-elementary divisors of $L$, we will view 
the entries of $L$ as coming from some $p$-adic local ring. Let $q=p^t$ and $K=\Q_p(\xi_{q-1})$ be the unique unramified
extension of degree $t$ over $\Q_p$, the field of $p$-adic numbers, where $\xi_{q-1}$ is a primitive $(q-1)^{\rm
th}$ root of unity in $K$. Let $R=\Z_p[\xi_{q-1}]$ be the ring of integers in $K$. Then
$pR$ is the unique maximal ideal of $R$ and $R/pR\cong\Fq$. Let $T:\Fstar\to R^\times$
be the Teichm\"uller character of $\Fq$. Then $T$ is an $R$-valued multiplicative character of $\Fq$ of order $q-1$. 
Hence $T$ generates the cyclic group $\Hom(\Fstar, R^\times)$.

Let $R^\Fq$ be the free $R$-module with basis indexed by the elements of $\Fq$. For clarity, we
write the basis element corresponding to $x\in \Fq$ as $[x]$.
Then $\Fstar$ acts on $R^\Fq$, permuting the basis by field multiplication,
so that $R^\Fq$ decomposes as the direct sum $R[0]\oplus R^\Fstar$ of a trivial
module with the regular module for $\Fstar$. 
The regular module
$R^\Fstar$  decomposes further into the direct sum  of nonisomorphic 
$R\Fstar$-submodules of $R$-rank $1$, affording the characters $T^i$,
$i=0, 1,\ldots, q-2$.  A basis element for the component affording $T^i$ is
$$
e_i=\sum_{x\in\Fstar}T^i(x^{-1})[x].
$$
Here the subscript $i$ is read modulo $q-1$.
So $R^\Fq$ has  basis $\{e_i \mid i=1,\ldots, q-2\} \cup \{e_0, [0]\}$,
where we have separated out the basis for the $\Fstar$-fixed points.

Next consider the action of the subgroup $S$ of squares in $\Fstar$ on $R^\Fstar$. Then for $0\leq i\leq q-2$, $T^i$ and $T^{i+k}$ are equal on $S$.
For  $0<i\leq k-1$,  let $M_i$ be the $R$-submodule spanned by
$\{e_i, e_{i+k}\}$. Then  $M_i$ is the isotypic component for the character
 $T^i\vert_S$.
The submodule of $S$-fixed points on $R^\Fq$ has basis $\{[0],e_0,e_k\}$, but
since $e_0+[0]=\allone=\sum_{x\in\Fq}[x]$, we will use the basis
 $\{\allone,[0],e_k\}$ instead. Let $M_0$ denote this submodule of $R$-rank 3.
Then we have the decomposition 
\begin{equation}\label{Mdecomp}
R^\Fq=M_0\oplus M_1\oplus\cdots\oplus M_{k-1} 
\end{equation}
We can view $A$ and $L$ as endomorphisms of $R^\Fq$, with
$$A([x])=\sum_{s\in S}[x+s], \; x\in \Fq$$ and $$L([x])=k[x]-\sum_{s\in S}[x+s], \; x\in \Fq.$$ 
Since the action of $S$ preserves adjacency the maps $A$ and $L$ are $RS$-module endomorphisms. It follows  that $A$ and $L$ map each isotypic component 
$M_i$ to itself, for $0\leq i\leq k-1$, and so they preserve the
decomposition (\ref{Mdecomp}). Thus, with respect to the basis of $R^\Fq$
formed from the bases of the $M_i$,  the matrices of  the maps $A$ and $L$ 
have block diagonal form, with a $(2\times 2)$-block for each $M_i$, 
for $1\leq i\leq k-1$, and a $(3\times 3)$-block for $M_0$. We are therefore reduced to computing the elementary divisors of $L\vert_{M_i}$ and determining for a given $p$-power its total multiplicity, as $i$ varies.

The next two lemmas compute $L$ on each of the $M_i$. The character $T^k=T^{-k}$
of $\Fstar$ is the quadratic character and we denote it by $\chi$. Following the convention of Ax
\cite{AX}, $T^0$ is the character that maps all elements of
$\Fq$ to 1, while $T^{q-1}$ maps 0 to 0 and all other elements
to 1. Moreover nonprincipal characters take the value $0$ at $0$. 
With these conventions the characteristic function of $S$ is
\begin{equation}\label{basic}
\frac12(\chi +T^0-\delta_0), 
\end{equation}
where $\delta_0$ is 1 at 0 and zero elsewhere.  Also we will need Jacobi sums, which we define below.  For any two integers $a, b$,  we define the Jacobi sum $J(T^a, T^b)$ by
$$J(T^a, T^b)=\sum_{x\in \Fq}T^a(x)T^b(1-x).$$ 
From the above definition and our convention on $T^0$ and
$T^{q-1}$, we see that if $a\not\equiv 0$ (mod $q-1$), then
$$J(T^{a},T^0)=0, \;{\rm and}\; J(T^{a},T^{q-1})=-1.$$

\begin{lemma} Suppose $0\leq i\leq q-2$ and $i\neq 0$, $k$. Then
$$
L(e_i)=\frac12(qe_i-J(T^{-i},\chi)e_{i+k})
$$
\end{lemma}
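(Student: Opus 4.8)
The plan is to compute $L(e_i)$ directly from the definitions by expanding everything in the basis $\{[x]\}$ and then re-collecting in terms of the $e_j$. First I would recall that $L = kI - A$ and that $A([x]) = \sum_{s\in S}[x+s]$, so
$$
L(e_i) = \sum_{x\in\Fstar}T^i(x^{-1})\Bigl(k[x] - \sum_{s\in S}[x+s]\Bigr).
$$
The term $k\sum_x T^i(x^{-1})[x]$ is just $k\,e_i$, so the work is in the sum $\sum_{x\in\Fstar}\sum_{s\in S}T^i(x^{-1})[x+s]$. Here I would substitute the characteristic function (\ref{basic}) of $S$ to write $\sum_{s\in S}[x+s] = \sum_{y\in\Fq}\frac12(\chi(y-x)+T^0(y-x)-\delta_0(y-x))[y]$, and then swap the order of summation so that the coefficient of each basis vector $[y]$ becomes an inner sum over $x\in\Fstar$ of $T^i(x^{-1})$ times one of those three pieces.

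The three pieces are handled separately. The $\delta_0$ piece contributes $-\frac12\sum_{x\in\Fstar}T^i(x^{-1})[x] = -\frac12 e_i$. The $T^0$ piece contributes $\frac12\bigl(\sum_{x\in\Fstar}T^i(x^{-1})\bigr)\sum_{y\in\Fq}[y]$, and since $i\neq 0$ the character sum $\sum_{x\in\Fstar}T^i(x^{-1})$ vanishes, killing this term entirely. The main piece is $\frac12\sum_{y\in\Fq}\bigl(\sum_{x\in\Fstar}T^i(x^{-1})\chi(y-x)\bigr)[y]$. For the coefficient of $[y]$ with $y\neq 0$, substitute $x = y u$ so that $T^i(x^{-1})=T^{-i}(y)T^{-i}(u)$ and $\chi(y-x)=\chi(y)\chi(1-u)$; since $T^{-i}(y)\chi(y)=T^{-i-k}(y)=T^{-(i+k)}(y)$ (using $\chi = T^k = T^{-k}$), the coefficient becomes $T^{-(i+k)}(y)\sum_{u}T^{-i}(u)\chi(1-u) = T^{-(i+k)}(y)\,J(T^{-i},\chi)$, because the convention on $T^0,T^{q-1}$ makes the sum over $u\in\Fq$ agree with the sum over $u\in\Fstar$ (the $u=0$ and $u=1$ terms are accounted for correctly as long as $-i\not\equiv 0$, which holds since $i\neq 0$, and $\chi\neq T^0$). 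One also checks the $y=0$ coefficient is $\sum_x T^i(x^{-1})\chi(-x) = \chi(-1)\sum_x T^{i}(x^{-1})\chi(x)=0$ when $i\neq k$ since then $T^i\chi=T^{i+k}$ is nonprincipal. Hence the main piece equals $\frac12 J(T^{-i},\chi)\,e_{i+k}$, and assembling gives $L(e_i) = k e_i + \frac12 e_i - \frac12 J(T^{-i},\chi)e_{i+k} = \frac12\bigl((2k+1)e_i - J(T^{-i},\chi)e_{i+k}\bigr) = \frac12\bigl(q e_i - J(T^{-i},\chi)e_{i+k}\bigr)$, using $2k+1 = q$.

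I expect the only real subtlety to be the careful bookkeeping at the "boundary" values $y=0$, $x$-index $0$, and argument $1$ of the Jacobi sum: one must verify that Ax's conventions on $T^0$ and $T^{q-1}$ make the restricted and unrestricted character sums coincide, and that the hypotheses $i\neq 0,k$ are exactly what is needed to kill the unwanted $e_i$-coefficient coming from the $T^0$ piece and the unwanted $e_0$-coefficient from the $y=0$ term. Everything else is a routine change of variables in a Gauss/Jacobi sum manipulation.
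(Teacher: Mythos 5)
Your proof is correct and follows essentially the same route as the paper: expand $A(e_i)$ using the characteristic function $\frac12(\chi+T^0-\delta_0)$ of $S$, kill the $T^0$ piece using $i\neq 0$, handle the $y=0$ coefficient using $i\neq k$, and identify the remaining sum as $J(T^{-i},\chi)e_{i+k}$ via the substitution $x=yu$. The bookkeeping at the boundary values and the role of the hypotheses $i\neq 0,k$ are exactly as in the paper's argument.
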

\begin{proof}
We have
$$
\begin{aligned} 
A(e_i)&=\sum_{x\in\Fstar} T^i(x^{-1})\sum_{y\in S}[x+y]\\
&=\frac12\sum_{x\in\Fstar} T^i(x^{-1})\sum_{y\in\Fq}(\chi(y)+T^0(y)-\delta_0(y))[x+y]\\
&=\frac12\sum_{x\in\Fstar} T^i(x^{-1})\sum_{y\in\Fq}\chi(y)[x+y]\\
&+\frac12\sum_{x\in\Fstar} T^i(x^{-1})\sum_{y\in\Fq}[x+y]
-\frac12\sum_{x\in\Fstar} T^i(x^{-1})[x].
\end{aligned}
$$
The second sum is zero and the third is $-\frac12e_i$. For the first sum,
we have
\begin{equation}\label{subz}
\begin{aligned}
\sum_{x\in\Fstar} T^i(x^{-1})\sum_{y\in\Fq}\chi(y)[x+y]
=\sum_{z\in\Fq}\sum_{x\in\Fstar}T^i(x^{-1})\chi(z-x)[z].
\end{aligned}
\end{equation}

Then if $z\neq 0$, we have
$T^i(x^{-1})\chi(z-x)=T^i(z^{-1})\chi(z)T^{-i}((x/z))\chi(1-(x/z))$. The sum
over $x$ of these terms is the same over $\Fstar$ or $\Fq$ and is equal to
$$T^i(z^{-1})\chi(z)J(T^{-i},\chi)=T^{i+k}(z^{-1})J(T^{-i},\chi).$$
If $z=0$ then $\sum_{x\in\Fstar}T^i(x^{-1})\chi(-x)=0$. Thus, the outer sum
over all $z\in\Fq$ can be taken over $\Fstar$ and is equal 
to  $J(T^{-i},\chi)e_{i+k}$.  The lemma now follows since $L=kI-A$.
\end{proof}

\begin{lemma}
\begin{itemize}
\item[(i)] $L(\allone)=0$.
\item[(ii)] $L(e_k)=\frac12(\allone-q([0]-e_k))$.
\item[(iii)] $L([0])=\frac12(q[0]-e_k-\allone)$.
\end{itemize}
\end{lemma}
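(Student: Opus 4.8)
The plan is to compute the adjacency map $A$ on each of the three basis vectors $\allone$, $[0]$, $e_k$ of $M_0$ and then read off $L=kI-A$. Part (i) is immediate: since $|S|=k$, we get $A(\allone)=\sum_{x\in\Fq}\sum_{s\in S}[x+s]=\sum_{s\in S}\sum_{x\in\Fq}[x+s]=\sum_{s\in S}\allone=k\allone$, so $L(\allone)=k\allone-A(\allone)=0$. This just records the general fact, already noted in the introduction, that $L$ annihilates the all-ones vector.

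For part (iii) I would expand the characteristic function of $S$ directly. Since $\chi=T^k$ vanishes at $0$, the restriction of the function (\ref{basic}) to $\Fstar$ is $\frac12(1+\chi)$; hence, using $e_k=\sum_{x\in\Fstar}\chi(x^{-1})[x]=\sum_{x\in\Fstar}\chi(x)[x]$ (because $\chi$ has order $2$), one gets $A([0])=\sum_{s\in S}[s]=\frac12\sum_{x\in\Fstar}(1+\chi(x))[x]=\frac12(\allone-[0])+\frac12 e_k$. Then $L([0])=k[0]-A([0])=(k+\tfrac12)[0]-\tfrac12 e_k-\tfrac12\allone$, and since $k+\tfrac12=\tfrac q2$ this is $\tfrac12(q[0]-e_k-\allone)$.

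For part (ii) I would rerun the computation in the preceding lemma with $i=k$, being careful about the two features that now degenerate. First, $T^{-k}=\chi$, so the Jacobi sum that appears is $J(\chi,\chi)$; because $q\equiv 1\pmod 4$ the element $-1$ is a square, so $\chi(-1)=1$, and the classical evaluation $J(\chi,\chi)=-\chi(-1)$ gives $J(\chi,\chi)=-1$. Second, $2k\equiv 0\pmod{q-1}$, so the character written $T^{i+k}$ in that proof is now the principal character $T^0$; consequently the outer sum over $z\in\Fstar$ contributes $\frac12 J(\chi,\chi)(\allone-[0])$, and the $z=0$ term, which vanished in the generic case, now equals $\frac12\left(\sum_{x\in\Fstar}\chi(x^{-1})\chi(-x)\right)[0]=\frac12\chi(-1)(q-1)[0]$. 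Adding the three pieces coming from (\ref{basic}) — the $\chi$-part just computed, which is $-\frac12\allone+\frac12 q[0]$; the $T^0$-part, which is $0$ since $\sum_{x\in\Fstar}\chi(x)=0$; and the $-\delta_0$-part, which is $-\frac12 e_k$ — yields $A(e_k)=-\frac12\allone+\frac12 q[0]-\frac12 e_k$, and hence $L(e_k)=k e_k-A(e_k)=\frac12\allone-\frac12 q[0]+(k+\tfrac12)e_k=\frac12(\allone-q([0]-e_k))$.

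The only real obstacle is in part (ii): one has to track correctly the terms that are absent from the generic computation of the previous lemma — the $z=0$ contribution, and the collapse of $T^{i+k}$ to the principal character — and to pin down $J(\chi,\chi)=-\chi(-1)$, which is where the hypothesis $q\equiv 1\pmod 4$ enters. Everything else is routine bookkeeping with the conventions on $T^0$ and $T^{q-1}$.
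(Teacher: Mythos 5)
Your proposal is correct and follows essentially the same route as the paper: parts (i) and (iii) by direct computation, and part (ii) by rerunning the generic Jacobi-sum calculation at $i=k$, using $J(\chi,\chi)=-\chi(-1)=-1$ and noting that the $z=0$ term now contributes $(q-1)[0]$ rather than vanishing. You have simply written out the bookkeeping that the paper leaves to the reader.
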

\begin{proof}
(i) is obvious and (iii) is straightforward. (ii) is proved by the same calculation
as in the previous lemma, using the fact that $J(\chi,\chi)=-\chi(-1)=-1$.
The only difference in the calculation is that in equation (\ref{subz})  the 
$z=0$ term is $(q-1)[0]$ instead of zero.

\end{proof}

\begin{corollary}\label{cor}
The Laplacian matrix $L$ is equivalent over $R$ to the diagonal matrix with diagonal entries $J(T^{-i},T^k)$,
for $i=1,\ldots, q-2$ and $i\neq k$, two $1$s and one zero. 
\end{corollary}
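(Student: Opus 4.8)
The plan is to read the block form of $L$ off the two preceding lemmas and then compute, block by block, the elementary divisors over the discrete valuation ring $R$, recalling that over a DVR the equivalence class of a matrix is determined by the multiset of valuations of its elementary divisors. Two families of elements will be invisible to this computation: since $q\equiv 1\pmod 4$ we have $p$ odd, so $\tfrac12\in R^\times$; and $q-1$, hence also $\mu=\tfrac{q-1}{4}$, reduces to a nonzero element of $R/pR=\Fq$ and so is a unit in $R$. The first step is to observe that the passage from the standard basis $\{[x]\mid x\in\Fq\}$ of $R^\Fq$ to the basis $\{e_1,\dots,e_{q-2}\}\cup\{\allone,[0],e_k\}$ is effected by a matrix in $GL_q(R)$: because $|\Fstar|=q-1$ is invertible in $R$ and $R$ contains all $(q-1)$-st roots of unity, $R^\Fstar$ is the direct sum of the free rank-one submodules $Re_i$, so $\{e_i\}\cup\{[0]\}$ is an $R$-basis of $R^\Fq$, and replacing $e_0$ by $\allone=e_0+[0]$ is a unimodular change. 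Hence $L$ is $R$-equivalent to the block-diagonal matrix whose blocks are the matrices of $L\vert_{M_i}$ in these bases, and it remains to handle each block.

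For $1\le i\le k-1$, the first lemma (using $e_{i+2k}=e_i$) gives the matrix of $L\vert_{M_i}$ in the basis $(e_i,e_{i+k})$ as
\[
\tfrac12\begin{pmatrix} q & -J(T^{-i-k},\chi)\\ -J(T^{-i},\chi) & q\end{pmatrix},
\]
whose determinant is $\tfrac14\bigl(q^2-J(T^{-i},\chi)\,J(T^{-i-k},\chi)\bigr)$. Factoring the two Jacobi sums into Gauss sums, one gets $J(T^{-i},\chi)\,J(T^{-i-k},\chi)=g(\chi)^2=\chi(-1)q=q$ (here $q\equiv1\pmod4$ gives $\chi(-1)=1$), so the determinant equals $q\mu$, i.e.\ $q$ up to a unit, and the $p$-adic valuations $a=v_p(J(T^{-i},\chi))$, $b=v_p(J(T^{-i-k},\chi))$ satisfy $a+b=v_p(q)=t$. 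In particular $a,b\le t=v_p(\det)$, so the corner entry $q$ does not reduce the gcd of the entries of the block; the first elementary divisor therefore has valuation $\min(a,b)$ and, since the product of the two elementary divisors has valuation $a+b$, the second has valuation $\max(a,b)$. Hence $L\vert_{M_i}$ is $R$-equivalent to $\diag\bigl(J(T^{-i},\chi),J(T^{-i-k},\chi)\bigr)$. As $i$ runs over $1,\dots,k-1$, the exponents $-i$ and $-i-k$ together run modulo $q-1$ over all of $\{1,\dots,q-2\}\setminus\{k\}$ (and $-i$ already permutes that set), and $\chi=T^k$; so these $2\times2$ blocks account for precisely the diagonal entries $J(T^{-i},T^k)$, $i\in\{1,\dots,q-2\}\setminus\{k\}$, asserted in the corollary.

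For the remaining block, the second lemma gives the matrix of $L\vert_{M_0}$ in the basis $(\allone,[0],e_k)$ as
\[
\tfrac12\begin{pmatrix} 0 & -1 & 1\\ 0 & q & -q\\ 0 & -1 & q\end{pmatrix},
\]
which has a zero column, the unit entry $-\tfrac12$, and the $2\times2$ minor on rows $1,3$ and columns $2,3$ equal to $\tfrac14(1-q)=-\mu\in R^\times$; its Smith normal form over $R$ is therefore $\diag(1,1,0)$, contributing the two $1$s and the single $0$. Putting the blocks together gives the corollary. I do not anticipate a serious obstacle in this argument: the one point requiring care is the valuation bookkeeping for the $2\times2$ blocks — specifically, that $v_p(q)=t$ is large enough (namely $\ge\min(a,b)$) that the entry $q$ does not interfere, which is exactly what the relation $J(T^{-i},\chi)\,J(T^{-i-k},\chi)=q$ guarantees. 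The genuinely hard part comes afterward: determining the valuations $v_p(J(T^{-i},T^k))$ via Stickelberger's theorem and then counting how many of these Jacobi sums have each given valuation.
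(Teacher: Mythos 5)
Your proof is correct and is essentially the argument the paper leaves implicit (the corollary is stated without proof): read off the block-diagonal form from the two lemmas and compute elementary divisors block by block over the DVR $R$. The one nontrivial ingredient you supply, the Gauss-sum identity $J(T^{-i},\chi)J(T^{-i-k},\chi)=g(\chi)^2=q$, is exactly what is needed to see that the corner entries $q$ in each $2\times 2$ block do not affect the invariant factors, and your bookkeeping of valuations and of the index set $\{1,\dots,q-2\}\setminus\{k\}$ is accurate.
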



In view of Corollary~\ref{cor}, to compute the $p$-elementary divisors of $L$, we will need to know the $p$-adic valuations of Jacobi sums. Using Stickelberger's theorem on Gauss sums \cite{stick} (see \cite[p.~636]{dwork} for further reference) and the well-known relation between Gauss and Jacobi sums, we have

\begin{theorem}\label{jacobidiv}
Let $a$ and $b$ be integers such that $a\not\equiv 0$ (mod $q-1$), $b\not\equiv 0$ (mod $q-1$), and $a+b\not\equiv 0$ (mod
$q-1$). For any integer $x$, we use $s(x)$ to denote the sum of digits in the expansion of the least
nonnegative residue of $x$ modulo $(q-1)$ as a base $p$ number. Then
$$\nu_p(J(T^{-a},T^{-b}))=\frac{s(a)+s(b)-s(a+b)}{p-1},$$
where $\nu_p(J(T^{-a},T^{-b}))$ is the $p$-adic valuation of $J(T^{-a},T^{-b})$. In other words, the number of times that $p$ divides $J(T^{-a},T^{-b})$ is equal to the number of carries in the addition $a+b$ (mod $q-1$).
\end{theorem}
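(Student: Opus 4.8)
The plan is to convert the Jacobi sum into a ratio of Gauss sums, apply Stickelberger's theorem to each Gauss sum individually, and then translate the resulting identity on base-$p$ digit sums into a count of carries. To begin, fix a primitive $p$-th root of unity $\zeta_p$ and the nontrivial additive character $\psi(x)=\zeta_p^{\Tr(x)}$ of $\Fq$ (with $\Tr$ the absolute trace), and for $c\not\equiv 0\pmod{q-1}$ put $g(T^{-c})=\sum_{x\in\Fstar}T^{-c}(x)\psi(x)\in\Z_p[\zeta_p,\xi_{q-1}]$. Under the hypotheses the characters $T^{-a}$, $T^{-b}$, $T^{-(a+b)}$ are all nontrivial, so the classical Gauss--Jacobi relation applies and gives
$$J(T^{-a},T^{-b})=\frac{g(T^{-a})\,g(T^{-b})}{g(T^{-(a+b)})};$$
the right-hand side is independent of the choice of $\psi$, since replacing $\psi(x)$ by $\psi(ux)$ multiplies each Gauss sum by a root of unity and these cancel. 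Taking the $p$-adic valuation $\nu_p$ --- normalised by $\nu_p(p)=1$ and extended to $\Q_p(\zeta_p,\xi_{q-1})$, a field in which $p$ ramifies with index $p-1$ --- reduces the theorem to the single statement $\nu_p(g(T^{-c}))=s(c)/(p-1)$ for every $c\not\equiv 0\pmod{q-1}$.

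That statement is Stickelberger's theorem on Gauss sums: if $\nu$ denotes the valuation of $\Q_p(\zeta_p,\xi_{q-1})$ normalised so that $\nu(1-\zeta_p)=1$ (hence $\nu(p)=p-1$), then $\nu(g(T^{-c}))=s(c)$, where, with the conventions fixed in the text, $s(c)$ is the base-$p$ digit sum of the least nonnegative residue of $c$ modulo $q-1$; dividing by $p-1$ puts this in the form needed above, and for it I would cite \cite{stick} (see also \cite[p.~636]{dwork}). Substituting into the Gauss--Jacobi relation gives
$$\nu_p\bigl(J(T^{-a},T^{-b})\bigr)=\frac{s(a)+s(b)-s(a+b)}{p-1},$$
which is the first assertion; note that this number is automatically a nonnegative integer, consistent with $J(T^{-a},T^{-b})$ lying in the unramified ring $R$.

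It remains to identify the numerator with $(p-1)$ times the number of carries. Writing the residues of $a$ and $b$ modulo $q-1$ as base-$p$ strings $a=\sum_{i=0}^{t-1}a_ip^i$ and $b=\sum_{i=0}^{t-1}b_ip^i$, let $\epsilon_i\in\{0,1\}$ be the carry into position $i$ when these are added in $\Z/(q-1)\Z$, a carry out of position $t-1$ re-entering at position $0$; the hypothesis $a+b\not\equiv 0$ is exactly what makes this cyclic process consistent, with a unique admissible value of $\epsilon_0$, so the number of carries $c=\#\{i:\epsilon_i=1\}$ is well defined. Each position satisfies $a_i+b_i+\epsilon_i=d_i+p\,\epsilon_{i+1}$, where $d_i$ is the $i$-th base-$p$ digit of $a+b\bmod(q-1)$; summing over $i=0,\dots,t-1$ and using $\sum_i\epsilon_i=\sum_i\epsilon_{i+1}=c$ yields $s(a)+s(b)+c=s(a+b)+pc$, that is, $s(a)+s(b)-s(a+b)=(p-1)c$, which finishes the proof.

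There is no deep obstacle here; the argument is Stickelberger's theorem wrapped in two pieces of bookkeeping, and the two points that genuinely need care are the following. First, one must use one and the same additive character --- equivalently, one fixed prime above $p$ --- in all three Gauss sums, so that the valuations really do add as claimed. Second, one must correctly handle the \emph{cyclic} carry count when $q$ is a proper prime power, where carries wrap around modulo $q-1=p^t-1$ rather than terminating; this cyclic convention is precisely what is compatible with Stickelberger's use of residues modulo $q-1$, and it is where the conventions on $T^0$, $T^{q-1}$, and the digit-sum function $s$ fixed before the statement do their work.
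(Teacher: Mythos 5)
Your argument is correct and is precisely the route the paper takes: the paper derives this theorem by citing Stickelberger's theorem together with the Gauss--Jacobi relation $J(T^{-a},T^{-b})=g(T^{-a})g(T^{-b})/g(T^{-(a+b)})$, and your proposal simply fills in those details, including the cyclic carry bookkeeping that the paper defers to \S\ref{count}. No gaps; the valuation normalization and the uniqueness of the modular carry sequence are handled correctly.
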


By Theorem~\ref{jacobidiv}, the $p$-adic valuation of $J(T^{-i},T^k)$ for $i\neq k$ is known to be equal to $c(i):=\frac{1}{p-1}(s(i)+\frac{t(p-1)}{2}-s(i+k))$, since
$s(k)=\frac{t(p-1)}{2}$. The number $c(i)$ can be interpreted as the number of carries, when adding the $p$-expansions of $i$ and $k$, modulo $q-1$. 
This will be formulated precisely in the next section.
It is already clear that
there are no elementary divisors of $L$ greater than $p^t$.  It is also easy to see that the $p$-rank of $L$ (i.e., the number of times that $p^0$ appears as an elementary divisor of $L$) equals $(\frac{p+1}2)^t$, since a necessary and sufficient condition for there to be no carries is that
each of the $p$-digits of $i$ be in the range from $0$ to $\frac{p-1}2$.
This was already shown by Brouwer and van Eijl \cite[p.336]{BE}.  Also, since
$c(i)+c(q-1-i)=t$, it follows from Corollary ~\ref{cor}
that the multiplicity of $p^t$ as an elementary divisor is $(\frac{p+1}2)^t-2$.
It remains to find the multiplicity of $p^\lambda$  for $1\leq \lambda\leq t-1$.
In order to find the multiplicity of $p^\lambda$ as an elementary divisor of $L$, 
we have to count the number of $i$ ranging from $1$ to $q-2$, $i\neq k$, such 
that adding $i$ to $k$ involves exactly $\lambda$ carries. 

\section{The Counting Problem}\label{count}

In order to finish our computations of the critical groups of $\Paley(q)$, we will solve the following counting 
problem using the transfer matrix method. For a discussion of the transfer matrix method and its various applications, we refer the reader to \cite[Section 4.7]{Stanley1}.\\

\noindent{\bf The Counting Problem.} Let $q=p^t$ be an odd prime power and $k=\frac{q-1}{2}$. For $1\leq \lambda\leq t-1$, what is the number of $i$, $1\leq i\leq q-2$,  $i\neq k$ such that adding $i$ to $\frac{q-1}2$ modulo $q-1$ involves exactly $\lambda$ carries?\\

We express integers $a$, $0\leq a\leq q-1$, in base $p$. That is,  we write
$$a=a_{t-1}p^{t-1}+a_{t-2}p^{t-2}+\cdots +a_1p+a_0,$$
where $0\leq a_i\leq p-1$ for all $i$. In what follows, we will simply write
 $a=a_{t-1}a_{t-2}\cdots a_1a_0$, and call the $a_i$'s the digits of $a$. 
Since we are going to add $a$ with $\frac{q-1}2$ modulo $q-1$, we will use 
the modular $p$-ary add-with-carry algorithm described in \cite[Theorem 4.1]{HHKWX}.
Let $a=a_{t-1}a_{t-2}\cdots a_1a_0$ and $b=b_{t-1}b_{t-2}\cdots b_1b_0$
be  integers in $\{1,2,\ldots, q-2\}$ such that $a+\frac{q-1}{2}=b$ modulo $q-1$.
By the modular $p$-ary add-with-carry algorithm (cf. \cite[Theorem 4.1]{HHKWX})
there is a unique {\it carry sequence}   
$c=c_{t-1}c_{t-2}\cdots c_1c_0$ with $c_i\in \{0,1\}$ and $c_{t}=c_{0}$ 
such that for all $0\leq i\leq t-1$, 
\begin{equation}\label{addwithcarry}
a_i+\frac{p-1}{2}+c_{i}=b_i+pc_{i+1}.
\end{equation}
By the number of carries we shall mean the number of $i$ with $c_i=1$. 

We will use the transfer matrix method to solve the above counting problem. 
This approach involves constructing a weighted digraph $G$, and changing the 
counting problem to that of counting closed walks in $G$ of certain length 
and weight. The above equations motivate us to construct the digraph $G$ on
 $[p]\times [2]$ (here $[p]=\{0,1,\ldots ,p-1\}$ and $[2]=\{0,1\}$) as 
follows: The vertices of $G$ are $({\alpha},{\gamma})\in [p]\times [2]$. There is an 
arc from $({\alpha},{\gamma})$ to $({\alpha}',{\gamma}')$ if and only if 
\begin{equation}\label{arc}
{\alpha}+\frac{p-1}{2}+{\gamma}={\beta}+p{\gamma}'
\end{equation}
for some ${\beta}\in [p]$. Furthermore if there is an arc $e$ from $({\alpha},{\gamma})$ 
to $({\alpha}',{\gamma}')$ we give the arc label ${\alpha}$ and weight $w(e):={\gamma}'$. 
By the weight of a walk in the digraph we shall mean the sum of the weights
of its arcs. 
Each walk of length $t$ in $G$ specifies an element $a=a_{t-1}a_{t-2}\cdots a_0\in\{0,1,\ldots,q-1\}$ by taking the first arc label to be $a_0$, the second to be $a_1$, etc.
In terms of the digraph $G$, the discussion leading up to equation (\ref{addwithcarry}) 
means that for each  $a\in\{1,\ldots,q-2\}\setminus\{\frac{q-1}{2}\}$, there is a unique closed walk of length $t$ in $G$ whose arc labels specify $a$, and that the weight of this walk is equal to the number of carries when adding $\frac{q-1}{2}$ to $a$ modulo $q-1$. 
There are some other closed walks of length $t$ in $G$, namely those whose
arc labels specify $a=0$, $\frac{q-1}{2}$ or $q-1$. 
We must check that these walks have weights  which are outside the range
$\{1,2,\ldots, t-1\}$ for $\lambda$ in our counting problem.
Once we have checked this, we  will know that our counting problem is equivalent to
counting, for each $1\leq\lambda\leq t-1$, the  closed  walks of length $t$ and  weight $\lambda$. If $a=0$, then $a_i=0$ for all $i$ and there is just one closed walk, 
of weight $0$. If $a=q-1$, then $a_i=p-1$ for all $i$ and there is again just one
closed walk, of weight $t$. Finally, for $a=\frac{q-1}{2}$ there are two
closed walks one of weight $0$, where in equation (\ref{arc}) we take
$\alpha=\frac{p-1}{2}$, $\beta=p-1$ and $\gamma=0$ for every arc,
and the other of weight $t$, where we take $\alpha=\frac{p-1}{2}$, $\beta=0$ and $\gamma=1$ for every arc.

Let $B$ be the adjacency matrix of the digraph $G$. More precisely, 
the rows and columns of $B$ are both indexed by $(\alpha,\gamma)\in [p]\times 
[2]$, and the entry $(({\alpha},\gamma), ({\alpha}', \gamma'))$ of ${B}$ is $0$ if there is no 
arc from $({\alpha},\gamma)$ to $({\alpha}',{\gamma}')$; is $1$ if there is an arc $e$ from 
$({\alpha},{\gamma})$ to $({\alpha}',{\gamma}')$ and $w(e)=0$; is $x$ if there is an arc $e$ 
from $({\alpha},{\gamma})$ to $({\alpha}',{\gamma}')$ and $w(e)=1$ (here $x$ is an indeterminate).  Note that since (\ref{arc}) does not involve ${\alpha}'$, the adjacency matrix 
${B}$ has only two distinct rows, each repeated $p$ times. More explicitly,
$${B}=
\left( 
{\begin{array}{rcc}&p&p\\
\frac{p+1}2 & \overbrace{\left\lbrace{\begin{array}{ccc} 1&\cdots&1\\
\vdots&&\vdots\\ 1&\cdots&1\end{array}}\right.}&
\overbrace{\begin{array}{ccc}
0&\cdots&0\\ \vdots&&\vdots\\ 0&\cdots&0 \end{array} }\\
\frac{p-1}2 & \left\lbrace \begin{array}{ccc}
0&\cdots&0\\ \vdots&&\vdots\\ 0&\cdots&0 \end{array} \right. &
\begin{array}{ccc}x&\cdots&x\\ \vdots&&\vdots\\ x&\cdots&x \end{array}\\
\frac{p-1}2 & \left\lbrace \begin{array}{ccc}
1 &\cdots&1\\ \vdots&&\vdots\\ 1&\cdots&1 \end{array} \right. &
\begin{array}{ccc}0&\cdots&0\\ \vdots&&\vdots\\ 0&\cdots&0 \end{array}\\
\frac{p+1}2 & \left\lbrace \begin{array}{ccc}
0&\cdots&0\\ \vdots&&\vdots\\ 0&\cdots&0 \end{array} \right. &
\begin{array}{ccc}x&\cdots&x\\ \vdots&&\vdots\\ x&\cdots&x \end{array}
\end{array}}
\right)
$$

If $\Psi=e_1e_2\cdots e_n$ is a walk in $G$, we define 
${\rm wt}(\Psi)=x^{w(e_1)+w(e_2)+\cdots +w(e_n)}$. 
Let $$C_G(n)=\sum_{\Psi}{\rm wt}(\Psi),$$ 
where the sum is extended over all closed walks of length 
$n$ in $G$. Then $C_G(n)=\sum_{m\geq 0}f(n,m)x^m$, where 
$f(n,m)$ is the number of closed walks of length $n$ and weight $m$. Let
$$F(z,x)=\sum_{n\geq 1}C_G(n)z^n.$$
By Corollary 4.7.3 of \cite[p. 501]{Stanley1}, we have
$$F(z,x)=-\frac{z \frac{\partial Q(z,x)}{\partial z}}{Q(z,x)},$$
where $Q(z,x)={\rm det}(I-z{B})$.  Using the above definition of $B$, we find that 
${\rm det}(I-zB)=1-\frac{p+1}{2}(1+x)z+pxz^2$. It follows that 
$$F(z,x)=\frac{U-2V}{1-U+V}=\frac{(U-V)-V}{1-(U-V)},$$
with $U=\frac{p+1}{2}(1+x)z$ and $V=pxz^2$. Extracting the coefficient of 
$x^mz^n$ in $F(z,x)$, we obtain
   $$f(n,m)=\sum_i\frac{n}{n-i}{n-i \choose i}{n-2i 
\choose m-i}(-p)^i\left(\frac{p+1}2\right)^{n-2i}.$$
Notice that $f(n,m)=f(n,n-m)$.

Summing up we have the following:

\begin{theorem}\label{final}
Let $q=p^t$ be a prime power congruent to 1 modulo 4.  Then the number of $p$-adic elementary divisors of 
$L(\Paley(q))$ which are equal to $p^\lambda,\ 0 \le \lambda<t$,  is
$$f(t,\lambda)=\sum_{i=0}^{\min\{\lambda,t-\lambda\}}\frac{t}{t-i}{t-i \choose i}{t-2i
\choose \lambda-i}(-p)^i\left(\frac{p+1}2\right)^{t-2i}.$$
The number of $p$-adic elementary divisors of $L(\Paley(q))$ which are equal to $p^t$ is 
$\left(\frac {p+1} 2\right)^t - 2$.
\end{theorem}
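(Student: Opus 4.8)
The plan is to convert the computation of the multiplicity of $p^\lambda$ as an elementary divisor of $L$ into a weighted count of closed walks of length $t$ in the digraph $G$, and then to evaluate that count by the transfer matrix method. By Corollary~\ref{cor} the list of $p$-adic elementary divisors of $L$ consists of the $p$-adic valuations of the $q-3$ Jacobi sums $J(T^{-i},T^k)$ with $1\le i\le q-2$, $i\ne k$, together with two $0$'s (from the two units) and one entry, coming from the zero, which is discarded. By Theorem~\ref{jacobidiv} this valuation is $c(i)$, the number of carries in $i+k\pmod{q-1}$. Hence, for $\lambda\ge 1$, the multiplicity of $p^\lambda$ as an elementary divisor of $L$ equals $\#\{i:1\le i\le q-2,\ i\ne k,\ c(i)=\lambda\}$, and for $\lambda=0$ it equals this cardinality plus $2$. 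So everything reduces to the Counting Problem of \S\ref{count}.

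Next I would make the correspondence with $G$ precise. By the modular $p$-ary add-with-carry algorithm \cite[Theorem~4.1]{HHKWX}, each $a\in\{1,\dots,q-2\}\setminus\{k\}$ determines a unique carry sequence, hence a unique closed walk of length $t$ in $G$ whose arc labels spell the digits of $a$, the weight of that walk being $c(a)$; the only other closed walks of length $t$ are the one spelling $a=0$ (weight $0$), the one spelling $a=q-1$ (weight $t$), and two walks spelling $a=k$ (weights $0$ and $t$). Thus $f(t,\lambda)$ exceeds $\#\{i:c(i)=\lambda\}$ exactly by the number of spurious walks of weight $\lambda$. For $1\le\lambda\le t-1$ there are none, so the multiplicity of $p^\lambda$ is $f(t,\lambda)$. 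For $\lambda=0$ the two spurious weight-$0$ walks match exactly the two unit elementary divisors of Corollary~\ref{cor}, so the multiplicity of $p^0$ is $f(t,0)$, and putting $m=0$ in the formula for $f$ (which kills the factor $\binom{t-2i}{-i}$ unless $i=0$) gives $f(t,0)=\left(\frac{p+1}{2}\right)^t$. For $\lambda=t$ the two spurious weight-$t$ walks have no counterpart among the elementary divisors; the resulting value $f(t,t)-2$ agrees, via the symmetry $f(n,m)=f(n,n-m)$, with the value $\left(\frac{p+1}{2}\right)^t-2$ already obtained from Corollary~\ref{cor} and the relation $c(i)+c(q-1-i)=t$.

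It remains to derive the closed form for $f(n,m)$, which is pure transfer matrix bookkeeping. Since the weighted adjacency matrix $B$ has only two distinct rows, $I-zB$ is a rank-$\le 2$ perturbation of the identity and $Q(z,x)=\det(I-zB)$ reduces to the $2\times 2$ evaluation $1-\frac{p+1}{2}(1+x)z+pxz^2$. Stanley's formula \cite[Cor.~4.7.3]{Stanley1} then yields $F(z,x)=\frac{U-2V}{1-U+V}$ with $U=\frac{p+1}{2}(1+x)z$ and $V=pxz^2$. Writing $F=\bigl((U-V)-V\bigr)\sum_{j\ge 0}(U-V)^j$, expanding each power $(U-V)^j$ by the binomial theorem, extracting the coefficient of $z^n$ and then of $x^m$, and consolidating the two contributions by means of $\binom{n-i}{i}+\binom{n-1-i}{i-1}=\frac{n}{n-i}\binom{n-i}{i}$, one obtains the asserted formula for $f(n,m)$. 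Specializing $n=t$ and $m=\lambda$ finishes the proof.

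The step I expect to be the real obstacle is not the algebra of the last paragraph, which is mechanical, but the bookkeeping in the middle one: one must check that the add-with-carry map is a genuine bijection onto the closed walks of length $t$, enumerate the four exceptional walks and their weights correctly, and then reconcile the two extra weight-$0$ walks with the two unit elementary divisors while discarding both the two weight-$t$ walks and the entry coming from the kernel of $L$. Making the single uniform formula $f(t,\lambda)$ correct at the two boundary exponents $\lambda=0$ and $\lambda=t$---and consistent with the separately stated value at $p^t$---is where the care is required.
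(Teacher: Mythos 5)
Your proposal is correct and follows essentially the same route as the paper: reduce via Corollary~\ref{cor} and Theorem~\ref{jacobidiv} to the carry-counting problem, encode it as closed walks in the digraph $G$, and extract $f(n,m)$ from $\det(I-zB)=1-\tfrac{p+1}{2}(1+x)z+pxz^2$ by Stanley's formula. Your observation that the two spurious weight-$0$ walks exactly account for the two unit diagonal entries is a slightly cleaner way to validate the formula at $\lambda=0$ than the paper's separate direct computation of the $p$-rank, but it is the same argument in substance.
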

\begin{example} In \cite{Lorenzini}, $K(\Paley(25))$ is calculated directly by a computer.  
Here as an illustration of Theorem~\ref{final} we use the above formula to compute $K(\Paley(5^3))$ and $K(\Paley(5^4))$:
$$f(3,0)=3^3=27.$$
$$f(3,1)={3 \choose 1} \cdot 3^3 -\frac 3 2 {2 \choose 1}{1\choose 0}
\cdot 5 \cdot 3 = 36.$$
Therefore $$K(\Paley(5^3))\cong (\Z/31\Z)^{62}\oplus (\Z/5\Z)^{36}\oplus(\Z/25\Z)^{36}
\oplus(\Z/125\Z)^{25}.$$

$$f(4,0)=3^4=81.$$
$$f(4,1)={4\choose 1}\cdot 3^4-\frac43{3\choose 1}{2\choose0}\cdot5\cdot 3^2=144.$$
$$f(4,2)={4\choose2}\cdot 3^4-\frac43{3\choose 1}{2\choose 1}\cdot5\cdot 3^2
+\frac 42{2\choose2}{0\choose0}\cdot 5^2=176.$$
Therefore $$K(\Paley(5^4))\cong (\Z/156\Z)^{312}\oplus
(\Z/5\Z)^{144}\oplus(\Z/25\Z)^{176}\oplus(\Z/125\Z)^{144}\oplus(\Z/625\Z)^{79}.$$
\end{example}

\bibliographystyle{amsplain}

\begin{thebibliography}{10}

\bibitem{AX}
J.~Ax, \emph{The zeroes of polynomials over finite fields}, Amer. J. Math.
  \textbf{86} (1964), 255--261.

\bibitem{Big}
N.~L. Biggs, \emph{Chip-firing and the critical group of a graph}, J. Algebraic
  Combin. \textbf{9} (1999), no.~1, 25--45.

\bibitem{BH}
A.~E. Brouwer and W.~H. Haemers, \emph{Spectra of graphs}, Universitext,
  Springer, New York, 2012. \MR{2882891}

\bibitem{BE}
A.~E. Brouwer and C.~A. van Eijl, \emph{On the {$p$}-rank of the adjacency
  matrices of strongly regular graphs}, J. Algebraic Combin. \textbf{1} (1992),
  no.~4, 329--346. \MR{1203680 (94b:05217)}

\bibitem{CX}
D.~B. Chandler and Q.~Xiang, \emph{The invariant factors of some cyclic
  difference sets}, J. Combin. Theory Ser. A \textbf{101} (2003), no.~1,
  131--146. \MR{1953284 (2004c:05034)}

\bibitem{DH}
D.~Dhar, \emph{Self-organized critical state of sandpile automaton models},
  Phys. Rev. Lett. \textbf{64} (1990), no.~14, 1613--1616.

\bibitem{dwork}
B.~Dwork, \emph{On the rationality of the zeta function of an algebraic
  variety}, Amer. J. Math. \textbf{82} (1960), 631--648. \MR{0140494 (25
  \#3914)}

\bibitem{HHKWX}
T.~Helleseth, H.~D.~L. Hollmann, A.~Kholosha, Z.~Wang, and Q.~Xiang,
  \emph{Proofs of two conjectures on ternary weakly regular bent functions},
  IEEE Trans. Inform. Theory \textbf{55} (2009), no.~11, 5272--5283.
  \MR{2596975 (2011c:94090)}

\bibitem{LP}
L.~Levine and J.~Propp, \emph{What is ... a sandpile?}, Notices Amer. Math.
  Soc. \textbf{57} (2010), no.~8, 976--979.

\bibitem{DL89}
D.~Lorenzini, \emph{Arithmetical graphs}, Math. Ann. \textbf{285} (1989),
  no.~3, 481--501.

\bibitem{Lorenzini}
D.~Lorenzini, \emph{Smith normal form and {L}aplacians}, J. Combin. Theory Ser.
  B \textbf{98} (2008), no.~6, 1271--1300. \MR{2462319 (2010d:05092)}

\bibitem{McWM}
F.~J. MacWilliams and H.~B. Mann, \emph{On the {$p$}-rank of the design matrix
  of a difference set}, Information and Control \textbf{12} (1968), 474--488.
  \MR{0242696 (39 \#4026)}

\bibitem{Rushanan1}
J.~J. Rushanan, \emph{Topics in integral matrices and abelian group codes},
  ProQuest LLC, Ann Arbor, MI, 1986, Thesis (Ph.D.)--California Institute of
  Technology. \MR{2635072}

\bibitem{sin4}
P.~Sin, \emph{Smith normal forms of incidence matrices}, Sci. China Math.
  \textbf{56} (2013), no.~7, 1359--1371. \MR{3073372}

\bibitem{Stanley1}
R.~P. Stanley, \emph{Enumerative combinatorics. {V}ol. 1, second edition},
  Cambridge Studies in Advanced Mathematics, vol.~49, Cambridge University
  Press, Cambridge, 2012. \MR{1442260 (98a:05001)}

\bibitem{stick}
L.~Stickelberger, \emph{Ueber eine {V}erallgemeinerung der {K}reistheilung},
  Math. Ann. \textbf{37} (1890), no.~3, 321--367. \MR{1510649}

\end{thebibliography}

\def\cprime{$'$}
\providecommand{\bysame}{\leavevmode\hbox to3em{\hrulefill}\thinspace}
\providecommand{\MR}{\relax\ifhmode\unskip\space\fi MR }
\providecommand{\MRhref}[2]{%
  \href{http://www.ams.org/mathscinet-getitem?mr=#1}{#2}
}
\providecommand{\href}[2]{#2}

\end{document}